\newtheorem{theorem}{Theorem}
\newtheorem{lemma}{Lemma}
\newtheorem{assumption}{Assumption}
\newtheorem{example}{Example}
\begin{document}
%
\title{Distributed Optimal Control and Application to Consensus of Multi-Agent Systems}

%
%
\author{Liping Zhang, 
        Juanjuan Xu, 
       Huanshui Zhang
      ~and Lihua Xie
\thanks{This work was supported by the Original Exploratory Program Project of National Natural Science Foundation of China (62250056), National Natural Science Foundation of China (62103240),
Youth Foundation of Natural Science Foundation of Shandong Province (ZR2021QF147),
Foundation for Innovative Research Groups of the National Natural Science Foundation of China(61821004),
Major Basic Research of Natural Science Foundation of Shandong Province (ZR2021ZD14),
 High-level Talent Team Project of Qingdao West Coast New Area (RCTD-JC-2019-05), Key Research and Development Program of Shandong Province (2020CXGC01208), and
Science and Technology Project of Qingdao West Coast New Area (2019-32, 2020-20, 2020-1-4).}
\thanks{L. Zhang and H. Zhang are with the College of Electrical Engineering and Automation,
Shandong University of Science and Technology, Qingdao 266590, China (e-mail:lpzhang1020@sdust.edu.cn; hszhang@sdu.edu.cn).}
\thanks{J. Xu is with the School of Control Science and Engineering, Shandong University, Jinan 250061, China (e-mail:juanjuanxu@sdu.edu.cn).}
\thanks{L. Xie is with the School of Electrical and Electronic Engineering, Nanyang Technological University, Singapore 639798, Singapore (e-mail:ELHXIE@ntu.edu.sg).}
}

\maketitle

\begin{abstract}
This paper develops a novel approach to the consensus problem of  multi-agent systems
by minimizing a weighted state error with neighbor agents via linear quadratic (LQ) optimal control theory.
Existing consensus control algorithms only utilize the current state of each agent,
and the design of distributed controller depends on nonzero eigenvalues of  the communication topology.
The presented optimal consensus controller is obtained by solving Riccati equations
and designing appropriate observers to account for agents' historical state information.
It is shown that the corresponding cost function under the proposed controllers is asymptotically optimal.
Simulation examples demonstrate the effectiveness of the proposed scheme, and a much faster convergence speed than the conventional consensus methods.
  Moreover,  the new method avoids computing nonzero eigenvalues of the communication topology as in the traditional consensus methods.
\end{abstract}

\begin{IEEEkeywords}
 Consensus,
 Distributed control,
 Observer,
 Heterogeneous multi-agent system
\end{IEEEkeywords}

%
\IEEEpeerreviewmaketitle

\section{Introduction}
Cooperative control problem for multi-agent systems has  attracted  considerable attentions from different scientific communities in recent years.
Multiple agents can coordinate with each other via communication topology to accomplish tasks that may be difficult for single agent,
and its potential applications include unmanned aerial vehicles, satellite formation,
 distributed robotics and wireless sensor networks \cite{Ren2007a,Olfati-Saber2007,Yang2022}.
 In the area of cooperative control of multi-agent systems, consensus is a fundamental and crucial problem,
 which refers to designing an appropriate distributed control protocol to steer all agents to achieve an agreement on certain variable\cite{Li2014}.
Thus, the consensus problem has been widely studied by numerous researchers from various perspectives.

Homogeneous multi-agent systems means that all agents have identical system dynamics, which mainly includes leaderless consensus and leader-follower consensus.
\cite{Olfati-Saber2004} proposed a general framework of the consensus problem for networks of first-order integrator agents with switching topologies
based on the relative states.
\cite{Ren2005} derived a sufficient condition, which was more relaxed than that in \cite{Olfati-Saber2004}, for achieving consensus of multi-agent systems.
Extending first-order consensus protocols in \cite{Ren2005},
the author in \cite{Ren2007} further studied distributed leader-following consensus algorithms for second-order integrators.
\cite{You2011} considered the consensusability of discrete-time multi-agent systems, and  an upper bound of the eigenratio (the ratio of the second smallest to the
largest eigenvalues of the graph Laplacian matrix) was derived to characterize the convergence rate.
\cite{Li2014} proposed a consensus region approach  to designing distributed adaptive consensus protocols with undirected and directed graph for general continuous-linear dynamics.
\cite{Feng2022} recently derived an optimal consensus region over directed communication graphs with a diagonalizable Laplacian matrix.
Besides, variants of these algorithms are also currently applied to tackle communication uncertainties, such as fading communication channels \cite{Xu2021},
packet loss \cite{Xu2019} and communication time-delays \cite{Liu2011}.
It should be pointed out that the  aforementioned consensus control protocols merely use each agent and its neighbor's current state information,
and ignore their historical state information.
Additionally, the solvability condition of the consensus gain matrix $K$ is dependent on the nonzero eigenvalues of Laplacian matrix or even
requires the communication topology to be a complete graph \cite{Cao2010}.
In particular, when agent number is large, the eigenvalues of the corresponding Laplacian matrix are difficult to be determined,
even if the eigenvalues are computable, since their calculation still imposes a significant computational burden.
\cite{Chen2020} presented a distributed consensus algorithm based on optimal control theory,
 while the state weight matrix in given performance index is a special form.

On the other hand, many actual systems are heterogeneous where system dynamics are different.
So far, the distributed feedforward control \cite{Su2012,Huang2017,Lu2017}
and  internal model  principle \cite{Wieland2011} are commonly used to solve the cooperative output regulation problem.
These tools are also generalized for dealing with robust output regulation, switching networks and cooperative-compete networks\cite{Li2015,Bi2022,Meng2017,Liu2018,Yaghmaie2017}.
In fact, the essence of both the algorithms can be attributed to two aspects:
first, the reference generator \cite{Scardovi2009} or the distributed observer estimating the reference system's state is a critical technology for designing distributed controllers;
second, the solvability conditions of output regulator equations or transmission zero conditions of the system are also necessary for solving the output consensus problem.

Motivated by the above analyses, in this paper,
 we study the consensus problem of discrete-time linear heterogeneous multi-agent systems with a novel consensus control protocol based on
 LQ optimal control theory.
 Compared with the existing results, the main contributions of this work are:
 1) We develop a novel consensus algorithm by minimizing the  weighted state errors of different neighbor agents.
 An optimal consensus controller with the observer incorporating each agent's historical state information is designed by solving  Riccati equations.
The corresponding global cost function under the proposed controllers is shown to be asymptotically optimal.
 2) The proposed new consensus controller can achieve much faster consensus speed than the traditional consensus method,
 and avoid computing nonzero eigenvalues of the Laplacian matrix associated with the communication topology.


The following notations will be used throughout this paper:
$\mathbb{R}^{n\times m}$ represents the set of $n\times m$-dimensional real matrices.
$I$ is the identity matrix of a given dimension.
$\mbox{diag}\{a_1,a_2,\cdots,a_{N}\}$ denotes the diagonal matrix with diagonal elements being $a_1,\cdots,a_{N}$.
$\rho(A)$ is the spectral radius of matrix $A$.
$\otimes$ denotes the Kronecker product.

Let the interaction among $N$ agents be described by a directed graph
$\mathcal{G}=\{\mathcal{V},\mathcal{E},\mathcal{A}\}$,
where $\mathcal{V}=\{1,2,\cdots,N\}$ is the set of vertices (nodes),
$\mathcal{E}\subseteq \mathcal{V}\times \mathcal{V}$ is the set of edges,
and $\mathcal{A}=[a_{ij}]\in  \mathbb{R}^{N\times N}$ is the signed weight matrix of $\mathcal{G}$,
$a_{ij}\neq 0$ if and only if the edge $(v_{j},v_{i})\in \mathcal{E}$,
and we assume that the graph has no self-loop, i.e., $a_{ii}=0$.
The neighbor of $v_{i}$ is denoted by $\mathcal{N}_{i}=\{j|(v_{j},v_{i})\in \mathcal{E}\}$.
The Laplacian matrix $\mathcal{L}= [l_{ij}]_{N\times N}$ associated with the adjacency matrix $\mathcal{A}$ is defined as
$l_{ii}=\sum\limits_{j\in N_{i}} a_{ij}$, $l_{ij}=-a_{ij}$ for $i\neq j$.
A directed path form $v_{i}$ to $v_{j}$ is represented by a sequence of edges $(v_{i},v_{i1}),(v_{i1},v_{i2}),\cdots,(v_{im},v_{j})$.
A directed graph is strongly connected if there exists a directed path between any pair of distinct nodes.

\section{Preliminary and Problem Formulation} \label{sec:preliminary}

\subsection{Problem Formulation}
 We consider a  heterogeneous  discrete-time multi-agent system consisting of $N$ agents over a directed graph $\mathcal{G}$ with the dynamics of each agent given by
\begin{align}\label{homogeneous multi-agent system}
x_{i}(k+1)=Ax_{i}(k)+B_{i}u_{i}(k), i=1,2,\cdots,N
\end{align}
where $x_{i}(k)\in \mathbb{R}^{n}$ and $u_{i}(k)\in \mathbb{R}^{m_{i}}$ are the state and the input of each agent.
$A\in \mathbb{R}^{n\times n}$ and $B_{i}\in \mathbb{R}^{n\times m_{i}}$ are the coefficient matrices.

The cost function of multi-agent systems \eqref{homogeneous multi-agent system} is given by
\begin{align}\label{cost-function}
J(s,\infty)&=\sum_{k=s}^{\infty}\left(\sum_{i=1}^{N} \sum_{j\in N_{i}}(x_{i}(k)-x_{j}(k))^{T}Q(x_{i}(k)-x_{j}(k))
\right.
\notag\\
&\quad \left.+ \sum_{i=1}^{N} u_{i}^{T}(k)R_{i}u_{i}(k)\right),
\end{align}
where $Q\geq 0$ and $R_{i}>0$ are  weighting matrices.

We aim to design a distributed control protocol $u_{i}(k)$
based on the available information from neighbors in \eqref{homogeneous multi-agent system} to minimize the performance \eqref{cost-function}.

Based on the optimal control theory, it is  clear that if the optimal controller exists,
it must have that
\begin{align}
\lim_{k\to \infty} \|x_{i}(k)-x_{j}(k)\|=0, i=1,\cdots, N.
\end{align}
In other word,
multi-agent systems \eqref{homogeneous multi-agent system} achieve consensus,
and the protocol is termed as an optimal control based protocol, which is completely different from classical approaches.
In fact, the commonly used consensus protocol \cite{Olfati-Saber2004,Ren2005} for multi-agent systems is designed as:
\begin{align}\label{general-controller}
u_{i}(k)=F\sum_{j\in N_{i}}a_{ij}(x_{j}(k)-x_{i}(k)),
\end{align}
where $F$ is a feedback gain matrix, which actually is dependent on $\lambda_{2}(\mathcal{L})$ and $\lambda_{N}(\mathcal{L})$ \cite{You2011}.
That is to say, one needs to solve non-zero eigenvalues for the Laplacian matrix $\mathcal{L}$ associated with the communication topology to determine the feedback gain $F$.

Different from the commonly used consensus protocol where the protocol is artificially defined,
the protocol in this paper is derived by optimizing a given LQ performance,
and the performance index \eqref{cost-function} is more general with a positive semi-definite weight matrix $Q\geq 0$.

\subsection{Preliminary}
Define the neighbor error variable among agents as: $e_{ij}(k)=x_{i}(k)-x_{j}(k)$.
Then, it can be obtained from \eqref{homogeneous multi-agent system}  that
\begin{align}\label{error-dynamics}
e_{ij}(k+1)=Ae_{ij}(k)+B_{i}u_{i}(k)-B_{j}u_{j}(k).
\end{align}
Let $\delta_{i}(k)=\begin{bmatrix}e_{ij_1}^{T} & e_{ij_2}^{T} \cdots& e_{ij_\ell}^{T}
\end{bmatrix}^{T}$ be the error vector between the $i$-th agent and its neighbor agent $\mu$ with $\mu=j_{1},\cdots,j_{\ell}$.
By stacking the error vectors,
the global error dynamics for the multi-agent system \eqref{homogeneous multi-agent system} has the form
\begin{align}\label{gloabl-error-system}
e(k+1)&=\tilde{A}e(k)+\sum_{i=1}^{N}\bar{B}_{i}u_{i}(k),
 \\
\mathcal{Y}_{i}(k)&=H_{i}e(k),
\end{align}
where
$e(k)=\begin{bmatrix}
 \delta_{1}^{T}(k) &\delta_{2}^{T}(k) &\cdots & \delta_{N}^{T}(k)
\end{bmatrix}^{T}$ is the global error vector, $\mathcal{Y}_{i}(k)$ is measurement.
$\tilde{A}=I_{N}\otimes A$,
 $\bar{B}_{i}$ consists of $0_{n\times m_{i}},B_{i}$ and $-B_{i}$,
   and $H_{i}$  is composed of $0$ and $I_{n}$,  whose specific forms are dependent on the interaction among agents.
 Denote $\bar{B}=\begin{bmatrix}
\bar{B}_{1} & \bar{B}_{2} &\cdots & \bar{B}_{N}
\end{bmatrix}$,
and
$u(k)=\begin{bmatrix}
u_{1}^{T}(k),\cdots,u_{N}^{T}(k)
\end{bmatrix}^{T}.$

 The cost function \eqref{cost-function} is rewritten as
\begin{align}\label{cost-function-error}
J(s,\infty)&=\sum_{k=s}^{\infty}\left(\sum_{i=1}^{N} \sum_{j\in N_{i}}e_{ij}^{T}(k)Qe_{ij}(k)+
 \sum_{i=1}^{N} u_{i}^{T}(k)R_{i}u_{i}(k)\right)
\notag\\
&=\sum_{k=s}^{\infty}[e^{T}(k)\widetilde{Q}e(k)+u^{T}(k)Ru(k)]
\end{align}
with $\tilde{Q}=diag\{Q,Q,\cdots,Q\}\geq 0$ and  $R=diag\{R_{1},R_{2},\cdots,R_{N}\}>0$.


\begin{assumption}\label{graph-assumption}
The directed graph $\mathcal{G}$ is strongly connected.
\end{assumption}

In the ideal (complete graph) case, the error information $e(k)$ is available for all agents,
 the solvability of the optimal control problem for system \eqref{gloabl-error-system} with the cost function \eqref{cost-function-error} is
 equivalent to the following standard LQ optimal control problem \cite{Anderson1971}.
 Moreover, under the centralized optimal controller \eqref{centralized-optimal-control-error},
multi-agent system \eqref{homogeneous multi-agent system} is able to achieve consensus.

\begin{lemma}\cite{Anderson1971}\label{LQR-control-error}
 Suppose that error information $e(k)$ is available for all agents,
the optimal controller with respect to the cost function \eqref{cost-function-error} is given by
\begin{align}\label{centralized-optimal-control-error}
u^{*}(k)=K_{e}e(k),
\end{align}
where the feedback gain $K_{e}$ is given by
\begin{align}\label{feedback-gain-matrix-Ke}
K_{e}=-(R+\bar{B}^{T}P_{e}\bar{B})^{-1}\bar{B}^{T}P_{e}\tilde{A}
\end{align}
and $P_{e}$ is the solution of the following  ARE
\begin{align}\label{algebra-riccati-equation-error}
P_{e}=\tilde{A}^{T}P_{e}\tilde{A}+\tilde{Q}-\tilde{A}^{T}P_{e}\bar{B}(R+ \bar{B}^{T}P_{e}\bar{B})^{-1}\bar{B}^{T}P_{e}\tilde{A}.
\end{align}
The corresponding optimal cost function is
\begin{align}\label{cost-function-centralized-control-error-feedback}
J^{*}(s,\infty)=e^{T}(s)P_{e}e(s).
\end{align}
Moreover, if $P_{e}$ is the unique positive definite solution to \eqref{algebra-riccati-equation-error},
then $\tilde{A}+\bar{B}K_{e}$ is stable.
\end{lemma}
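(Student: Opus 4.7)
The plan is to recognize that, in the ideal case, (6)--(7) together with the cost (8) is a standard discrete-time infinite-horizon LQR problem on the aggregated error state $e(k)$ with system pair $(\tilde A, \bar B)$ and weights $(\tilde Q, R)$, so the result reduces to the Anderson--Moore dictionary. I would organize the argument in three steps: a finite-horizon dynamic-programming derivation, a passage to the infinite horizon, and then the stability claim.

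First, I would treat the finite-horizon problem with terminal cost $e^T(T+1)P_{T+1}e(T+1)$ initialized at $P_{T+1}=0$. Postulating a quadratic value function $V(k,e) = e^T P(k) e$, substituting into the Bellman recursion $V(k,e) = \min_{u}\{e^T\tilde Q e + u^T R u + V(k+1, \tilde A e + \bar B u)\}$, and completing the square in $u$ yields both the time-varying optimal feedback
$$u^*(k) = -\bigl(R + \bar B^T P(k+1)\bar B\bigr)^{-1}\bar B^T P(k+1)\tilde A\, e(k)$$
and the backward Riccati recursion
$$P(k) = \tilde A^T P(k+1)\tilde A + \tilde Q - \tilde A^T P(k+1)\bar B\bigl(R + \bar B^T P(k+1)\bar B\bigr)^{-1}\bar B^T P(k+1)\tilde A,$$
which is verified by a straightforward backward induction on $k$.

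Second, to obtain (9)--(11), I would exploit the monotonicity of the value function in the horizon $T$: with $P_{T+1}=0$ and $\tilde Q \geq 0$, $R>0$, the sequence $\{P(s)\}_{T}$ is monotone nondecreasing in $T$, and provided $(\tilde A,\bar B)$ admits a stabilizing feedback, it is also bounded. Hence it converges to a limit $P_e$ that satisfies the ARE (10); the associated time-invariant gain is (9), and dominated convergence of the cost-to-go gives $J^*(s,\infty) = e^T(s) P_e e(s)$.

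Third, for the stability claim I would take $V(e) = e^T P_e e$ as a Lyapunov candidate on $e(k+1) = (\tilde A + \bar B K_e)e(k)$. Using the ARE and completing the square yields
$$V(e(k+1)) - V(e(k)) = -e^T(k)\tilde Q e(k) - u^{*T}(k) R u^*(k) \le 0,$$
while $P_e>0$ makes $V$ positive definite and radially unbounded. Summation gives $\sum_{k} u^{*T}(k) R u^*(k) \le e^T(s) P_e e(s) < \infty$, and $R>0$ then forces $u^*(k)\to 0$. The main obstacle is upgrading this to $e(k)\to 0$: because $\tilde Q$ is only positive semidefinite, the Lyapunov decrement is merely negative semidefinite, so a LaSalle-type argument is required. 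Concretely, any closed-loop eigenpair $(\lambda,v)$ with $|\lambda|\ge 1$ would yield a trajectory $e(k)=\lambda^{k-s}v$ along which the decrement vanishes identically, forcing $\tilde Q v = 0$ and $K_e v = 0$, hence $\tilde A v = \lambda v$; the assumption that $P_e$ is the \emph{unique} positive definite solution of the ARE is precisely what rules out such a mode (an alternative positive definite solution could otherwise be constructed on the corresponding invariant subspace), yielding stability of $\tilde A + \bar B K_e$ and completing the proof.
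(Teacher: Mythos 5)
Your proposal is correct: the paper gives no proof of this lemma (it is quoted directly from Anderson--Moore), and your three-step reconstruction — finite-horizon dynamic programming, monotone passage to the infinite horizon, and the Lyapunov/LaSalle argument reducing the stability claim to uniqueness of the positive definite ARE solution — is exactly the standard argument the citation stands in for. The only loose phrasing is in the last step: for $|\lambda|>1$ the decrement does not ``vanish identically'' but is strictly positive, which directly contradicts the nonpositive Lyapunov decrement; only the $|\lambda|=1$ case needs the unobservable-mode/uniqueness argument.
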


\section{Main Results}\label{sec:main-results}
\subsection{Consensus of multi-agent systems \eqref{homogeneous multi-agent system} based on relative error feedback}
In this subsection, we design a novel distributed observer-based controller  only using the relative error information from neighbors.

We design the following distributed controllers as
\begin{align} \label{error-controller}
u_{i}^{*}(k)&=K_{ei}\hat{e}_{i}(k), \quad i=1,2,\cdots,N,
 \end{align}
where  $\hat{e}_{i}(k),i=1,2,\cdots,N$ are distributed observers to estimate the global error $e(k)$ in system \eqref{gloabl-error-system},
which is based on the available information of agent $i$. Thus,
\begin{subequations}\label{observer-2}
\begin{align}\label{observer-2-1}
\hat{e}_{1}(k+1)&=\tilde{A}\hat{e}_{1}(k)+\bar{B}_{1}u_{1}^{*}(k)  +\bar{B}_{2}K_{e2}\hat{e}_{1}(k)
\notag\\
&\quad +\cdots+\bar{B}_{N}K_{eN}\hat{e}_{1}(k)
\notag\\
&\quad +\Upsilon_{1}(\mathcal{Y}_{1}(k)-H_{1}\hat{e}_{1}(k))
\\
& \quad \cdots \quad \cdots \quad \cdots
 \notag\\
\hat{e}_{i}(k+1)&=\tilde{A}\hat{e}_{i}(k)+\bar{B}_{1}K_{e1}\hat{e}_{i}(k) +\cdots
\notag\\
&\quad+ \bar{B}_{i-1}K_{e_{i-1}}\hat{e}_{i}(k) +\bar{B}_{i} u_{i}^{*}(k)
\notag\\
&\quad +\bar{B}_{i+1}K_{e_{i+1}}\hat{e}_{i}(k) +\cdots+\bar{B}_{N}K_{eN} \hat{e}_{i}(k)
\notag\\
&\quad +\Upsilon_{i}(\mathcal{Y}_{i}(k)-H_{i}\hat{e}_{i}(k)),
 \\
& \quad \cdots \quad \cdots \quad \cdots
 \notag \\ \label{observer-2-2}
\hat{e}_{N}(k+1)&=\tilde{A}\hat{e}_{N}(k)+\bar{B}_{1}K_{e1}\hat{e}_{N}(k) +\cdots
\notag\\
&\quad+ \bar{B}_{N-1}K_{e_{N-1}}\hat{e}_{N}(k) +\bar{B}_{N} u_{N}^{*}(k)
\notag\\
&\quad +\Upsilon_{N}(\mathcal{Y}_{N}(k)-H_{N}\hat{e}_{N}(k)),
\end{align}
\end{subequations}
with  $K_{ei}=\begin{bmatrix}
0 &\cdots & I& 0\cdots & 0
\end{bmatrix}K_{e}$, which
is obtained by solving an ARE \eqref{algebra-riccati-equation-error},
and the observer gain $\Upsilon_{i}$ is to be determined later to
ensure the stability of the observers.

\begin{theorem}\label{main-result-2}
  Consider the global error system \eqref{gloabl-error-system},
and the distributed control laws  \eqref{error-controller} and \eqref{observer-2}.
 If there exist observer gains $\Upsilon_{i},i=1,\cdots,N$ such that the matrix
\begin{align}\label{A_ec}
\tilde{A}_{ec}=\begin{bmatrix}
\Theta_1 & -\bar{B}_{2}K_{e2} &  \cdots & -\bar{B}_{N}K_{eN}\\
-\bar{B}_{1}K_{e1} & \Theta_2 &   \cdots & -\bar{B}_{N}K_{eN} \\
 \vdots &  \vdots & \ddots  & \vdots \\
-\bar{B}_{1}K_{e1} &  \cdots  & -\bar{B}_{N-1}K_{e_{N-1}} &  \Theta_N
\end{bmatrix}
\end{align}
is stable, where $\Theta_{i}=\tilde{A}+\bar{B}K_{e}-\bar{B}_{i}K_{ei}-\Upsilon_{i}H_{i}$.
Then the observers \eqref{observer-2} are stable under the controller \eqref{error-controller}, i.e.,
\begin{align}\label{observer-error-vector-error-form}
\lim_{k\to \infty}\|\hat{e}_{i}(k)-e(k)\|=0.
\end{align}
Moreover,  if the Riccati equation \eqref{algebra-riccati-equation-error} has a positive definite solution $P_{e}$,
 under the distributed feedback controllers \eqref{error-controller},
the multi-agent systems \eqref{homogeneous multi-agent system} can achieve consensus.
 \end{theorem}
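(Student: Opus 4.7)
The plan is to carry out a separation-principle style argument: first show the observer errors $\tilde e_i(k)\triangleq \hat e_i(k)-e(k)$ are governed \emph{exactly} by the matrix $\tilde A_{ec}$, so that its stability forces $\tilde e_i(k)\to 0$; then rewrite the true closed-loop dynamics of $e(k)$ as a stable system driven by a vanishing input, giving $e(k)\to 0$, which in turn implies consensus.

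First I would substitute $u_i^*(k)=K_{ei}\hat e_i(k)$ into each observer equation in \eqref{observer-2}. Because the $i$-th observer uses its own estimate $\hat e_i(k)$ both for the local input $\bar B_i u_i^*$ and for every other agent's term $\bar B_j K_{ej}\hat e_i(k)$, the $i$-th observer collapses to
\begin{equation*}
\hat e_i(k+1)=(\tilde A+\bar BK_e-\Upsilon_i H_i)\hat e_i(k)+\Upsilon_i H_i e(k).
\end{equation*}
On the other hand, the actual dynamics of the global error variable under the distributed law is $e(k+1)=\tilde A e(k)+\sum_j \bar B_j K_{ej}\hat e_j(k)=(\tilde A+\bar BK_e)e(k)+\sum_j \bar B_j K_{ej}\tilde e_j(k)$. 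Subtracting these two expressions, the $\Upsilon_i H_i e(k)$ and $\tilde A e(k)$ terms recombine cleanly, and the $\sum_j \bar B_j K_{ej}\hat e_j(k)$ term splits into a self-term that merges with $\bar BK_e\hat e_i(k)$ to form $\Theta_i\tilde e_i(k)$ plus the off-diagonal coupling $-\sum_{j\neq i}\bar B_j K_{ej}\tilde e_j(k)$. Stacking the $\tilde e_i$ produces precisely $\tilde e(k+1)=\tilde A_{ec}\tilde e(k)$ with $\tilde A_{ec}$ as in \eqref{A_ec}, so the hypothesized stability of $\tilde A_{ec}$ yields \eqref{observer-error-vector-error-form}.

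For the second claim, I would view $e(k+1)=(\tilde A+\bar BK_e)e(k)+\sum_i\bar B_i K_{ei}\tilde e_i(k)$ as a cascade: by Lemma~\ref{LQR-control-error}, the hypothesis that \eqref{algebra-riccati-equation-error} admits a positive definite $P_e$ makes $\tilde A+\bar BK_e$ Schur, and the exponentially decaying perturbation $\sum_i\bar B_i K_{ei}\tilde e_i(k)$ drives a stable linear system, so $e(k)\to 0$. Since $e(k)$ stacks all neighbor differences $x_i(k)-x_j(k)$ for $(v_j,v_i)\in\mathcal E$, Assumption~\ref{graph-assumption} (strong connectedness) propagates the vanishing differences along directed paths between any two nodes, yielding $\lim_{k\to\infty}\|x_i(k)-x_j(k)\|=0$ for all $i,j$, which is consensus.

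I expect the main obstacle to be the first bookkeeping step: each local observer deliberately pretends that every other agent's input is $K_{ej}\hat e_i$ rather than the true $K_{ej}\hat e_j$, so the model mismatch terms must be grouped carefully to recover the precise block structure of $\tilde A_{ec}$ (in particular, to see that the diagonal blocks are $\Theta_i=\tilde A+\bar BK_e-\bar B_i K_{ei}-\Upsilon_i H_i$ rather than $\tilde A+\bar BK_e-\Upsilon_i H_i$). The remaining cascade argument and the graph-theoretic propagation of consensus are then essentially standard.
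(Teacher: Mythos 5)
Your proposal is correct and follows essentially the same route as the paper: you derive the stacked observer-error dynamics $\tilde e(k+1)=\tilde A_{ec}\tilde e(k)$ (your bookkeeping, including the diagonal blocks $\Theta_i$, checks out), and then treat $e(k+1)=(\tilde A+\bar BK_e)e(k)+\sum_i\bar B_iK_{ei}\tilde e_i(k)$ as a block-triangular cascade whose stability follows from the positive definite solution of the Riccati equation. The only differences are cosmetic (opposite sign convention for $\tilde e_i$ and the explicit closing remark on propagating neighbor differences via strong connectedness, which the paper leaves implicit).
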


\begin{proof}
Denoting observer error vectors
\begin{align}
\tilde{e}_{i}(k)= e(k)-\hat{e}_{i}(k).
\end{align}
Then, combining system \eqref{gloabl-error-system} with observers \eqref{observer-2}, one obtains
\begin{align}\label{gloabl-error-closed-loop-system}
e(k+1)&=(\tilde{A}+\bar{B}K_{e})e(k)- \bar{B}_{1}K_{e1}\tilde{e}_{1}(k)
\notag\\
&\quad -\bar{B}_{2}K_{e2}\tilde{e}_{2}(k)-\cdots
 -\bar{B}_{N}K_{eN}\tilde{e}_{N}(k)
\end{align}
and
\begin{subequations}\label{observer-error-systems-gloabl}
\begin{align}
 \tilde{e}_{1}(k+1)&=(\tilde{A}+\bar{B}K_{e}-\bar{B}_{1}K_{e1}-\Upsilon_{1}H_{1})\tilde{e}_{1}(k)
\notag\\
&\quad -\bar{B}_{2}K_{e2}\tilde{e}_{2}(k)-\cdots-\bar{B}_{N}K_{eN}\tilde{e}_{N}(k)
\\
&\quad \cdots \quad \cdots \quad \cdots
\notag\\
\tilde{e}_{i}(k+1)&=(\tilde{A}+\bar{B}K_{e}-\bar{B}_{i}K_{ei}-\Upsilon_{i}H_{i})\tilde{e}_{i}(k)
\notag\\
&\quad -\bar{B}_{1}K_{e1}\tilde{e}_{1}(k)-\cdots-\bar{B}_{i-1}K_{e_{i-1}}\tilde{e}_{i-1}(k)
\notag\\
&\quad
-\bar{B}_{i+1}K_{e_{i+1}}\tilde{e}_{i+1}(k)-\cdots-\bar{B}_{N}K_{eN}\tilde{e}_{N}(k)
\\
& \quad \cdots  \quad  \cdots \quad  \cdots
\notag\\
\tilde{e}_{N}(k+1)&=(\tilde{A}+\bar{B}K_{e}-\bar{B}_{N}K_{eN}-\Upsilon_{N}H_{N})\tilde{e}_{N}(k)
\notag\\
&\quad -\bar{B}_{1}K_{e1}\tilde{e}_{1}(k)-\cdots-\bar{B}_{N-1}K_{e_{N-1}}\tilde{e}_{N-1}(k).
\end{align}
\end{subequations}

According to \eqref{observer-error-systems-gloabl},
we have
\begin{align}
\tilde{e}(k+1)=\tilde{A}_{ec}\tilde{e}(k),
\end{align}
where
$\tilde{e}(k)=\begin{bmatrix}\tilde{e}_{1}^{T}(k), \tilde{e}_{2}^{T}(k),\cdots,\tilde{e}_{N}^{T}(k) \end{bmatrix}^{T}$.
Obviously, if there exist matrices $\Upsilon_{i}$ such that $\tilde{A}_{ec}$ is stable, then observer errors $\tilde{e}(k)$ converge to zero as $k\to \infty$,
i.e., Eq. \eqref{observer-error-vector-error-form} holds.
Furthermore,
it follows from \eqref{gloabl-error-closed-loop-system} and \eqref{observer-error-systems-gloabl} that
\begin{align}\label{closed-loop-error-system-1}
\begin{bmatrix}
e(k+1)\\
\tilde{e}(k+1)
\end{bmatrix}
=\bar{A}_{ec} \begin{bmatrix}
e(k)\\
\tilde{e}(k)
\end{bmatrix}
\end{align}
where $\bar{A}_{ec}=\begin{bmatrix}
\tilde{A}+\bar{B}K_{e} &  \Omega_{e} \\
 0 & \tilde{A}_{ec}
\end{bmatrix}$ and $\Omega_{e}=\begin{bmatrix}
 -\bar{B}_{1}K_{e1}& \cdots & -\bar{B}_{N}K_{eN}
\end{bmatrix}$.
Since $P_{e}$ is the positive definite solution to Riccati equation \eqref{algebra-riccati-equation-error},  then $\tilde{A}+\bar{B}K_{e}$ is stable,
 based on the LQ control theory, the consensus of multi-agent system \eqref{homogeneous multi-agent system} can be achieved.

The proof is completed.
\end{proof}

Observe from \eqref{closed-loop-error-system-1}  that since $K_e$  has been given in \eqref{feedback-gain-matrix-Ke},
  the consensus error dynamics is dependent on $\tilde{A}_{ec}$, which is determined by the observer gains $\Upsilon_{i}$.
   Thus, to speed up the convergence of consensus,
   the remaining problem is  to choose the  matrix of $\Upsilon_{i},i=1,2,\cdots,N$ such that the maximum eigenvalue of
$\tilde{A}_{ec}$  is as small as possible. To this end, let $\rho>0$
such that
\begin{align*}
\tilde{A}_{ec}^{T}\tilde{A}_{ec}\leq \rho I,
\end{align*}
or
\begin{align}\label{Lmi}
\begin{bmatrix}
  -\rho I   &  \tilde{A}_{ec}^{T} \\
  \tilde{A}_{ec} & -I
   \end{bmatrix} \leq 0,
\end{align}
where $\tilde{A}_{ec}$ is as in \eqref{A_ec}.
 Then the optimal gain matrices  $\Upsilon_{i}$ are chosen by:
 \begin{align}\label{choose-gamma}
  \min_{\Upsilon_{i}}\rho \quad  \quad \mbox{s.t.} \quad \eqref{Lmi},\quad i=1,\cdots,N.
 \end{align}

%

Next, we will derive the cost difference between the proposed distributed controller \eqref{error-controller}
and the centralized optimal control \eqref{centralized-optimal-control-error}, and then analyze the asymptotical optimal property of the corresponding cost function.

For the convenience of analysis, denote that
\begin{align*}
M_{e1}&=(\tilde{A}+\bar{B}K_{e})^{T}P_{e}\Omega_{e}-\Omega_{e1},
\notag\\
M_{e2}&=\begin{bmatrix}
K_{e1}^{T}R_{1}K_{e1} &  0 &\cdots & 0\\
0  & K_{e2}^{T}R_{2}K_{e2} &\cdots & 0\\
\vdots & \vdots& \ddots & \vdots\\
0 & 0 &\cdots & K_{eN}^{T}R_{N}K_{eN}
\end{bmatrix}
+\Omega_{e2},
\notag\\
\Omega_{e}&=
\begin{bmatrix}
-\bar{B}_{1}K_{e1} &  \cdots & -\bar{B}_{N}K_{eN}
\end{bmatrix},
\notag\\
\Omega_{e1}&=\begin{bmatrix}
K_{e1}^{T}R_{1}K_{e1} &  \cdots & K_{eN}^{T}R_{N}K_{eN}
\end{bmatrix},
\notag\\
\Omega_{e2}&=\Omega_{e}^{T}P_{e}\Omega_{e}.
\end{align*}
 \vspace{-0.5cm}
\begin{theorem}\label{cost-difference-theorem-error-1}
Under the proposed distributed controllers \eqref{error-controller} and \eqref{observer-2} with $\Gamma_{i},i=1,2,\cdots,N$ chosen from
 the optimization  in \eqref{choose-gamma},
the corresponding cost function  is given by
\begin{align}\label{cost-function-distributed-control-error-homo}
&J^{\star}(s,\infty)
\notag\\
&=e^{T}(s)P_{e}e(s)+\sum_{k=s}^{\infty}\begin{bmatrix}
e(k)\\
\tilde{e}(k)
\end{bmatrix}^{T}
\begin{bmatrix}
0 & M_{e1}\\
M_{e1}^{T} & M_{e2}
\end{bmatrix}
\begin{bmatrix}
e(k)\\
\tilde{e}(k)
\end{bmatrix}
\end{align}
Moreover, the cost difference between the cost function \eqref{cost-function-distributed-control-error-homo} and the cost under the centralized optimal control
is given by
\begin{align}\label{cost-difference-error-1}
 \Delta J(s,\infty)&=J^{\star}(s,\infty)-J^{*}(s,\infty)
 \notag\\
 &=\sum_{k=s}^{\infty}\begin{bmatrix}
e(k)\\
\tilde{e}(k)
\end{bmatrix}^{T}
\begin{bmatrix}
0 & M_{e1}\\
M_{e1}^{T} & M_{e2}
\end{bmatrix}
\begin{bmatrix}
e(k)\\
\tilde{e}(k)
\end{bmatrix}.
\end{align}
In particular, the optimal cost function difference will approach to zero as $s$ is sufficiently large.
That is to say, the proposed  consensus controller  can achieve the optimal cost (asymptotically).
\end{theorem}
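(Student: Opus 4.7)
The plan is to use a Lyapunov-style ``completing the squares'' identity with the candidate function $V(k) = e^{T}(k) P_{e} e(k)$, where $P_{e}$ solves the ARE \eqref{algebra-riccati-equation-error}. The intuition is that if we could use the unrealizable centralized control $u(k) = K_{e} e(k)$, the per-step identity $L(k) + V(k+1) - V(k) = 0$ would yield $J^{*}(s,\infty) = V(s)$; the distributed control injects an additive perturbation driven by the observer errors $\tilde{e}_{i}(k)$, and one only has to track that perturbation through the identity. Summing the resulting per-step equality and invoking stability (from Theorem \ref{main-result-2}) to kill the boundary term at infinity will give both formulas in the theorem.

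First, I would rewrite the distributed input in the key form
\begin{align*}
u^{*}(k) = K_{e} e(k) - W(k), \qquad W(k) = \begin{bmatrix} K_{e1}\tilde{e}_{1}(k) \\ \vdots \\ K_{eN}\tilde{e}_{N}(k) \end{bmatrix},
\end{align*}
using $\hat{e}_{i}(k) = e(k) - \tilde{e}_{i}(k)$, so that the closed-loop $e$-dynamics become $e(k+1) = (\tilde{A}+\bar{B}K_{e}) e(k) + \Omega_{e} \tilde{e}(k)$, consistent with \eqref{closed-loop-error-system-1}. Next, from the ARE and the explicit form of $K_{e}$ I would derive the Lyapunov identity
\begin{align*}
(\tilde{A}+\bar{B}K_{e})^{T} P_{e} (\tilde{A}+\bar{B}K_{e}) - P_{e} = -\widetilde{Q} - K_{e}^{T} R K_{e},
\end{align*}
which is the standard reduction of the Riccati equation under the optimal gain.

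Then I would expand the one-step increment $L(k) + V(k+1) - V(k)$, where $L(k) = e^{T}(k)\widetilde{Q}e(k) + u^{*T}(k)R u^{*}(k)$. Plugging in $u^{*} = K_{e} e - W$ and using the Lyapunov identity above, the $e^{T}\widetilde{Q}e$ and $e^{T}K_{e}^{T}R K_{e} e$ contributions cancel, and what remains is purely the cross and quadratic terms in $\tilde{e}$:
\begin{align*}
L(k) + V(k+1) - V(k) = 2 e^{T}(k) M_{e1} \tilde{e}(k) + \tilde{e}^{T}(k) M_{e2} \tilde{e}(k),
\end{align*}
with $M_{e1}$ and $M_{e2}$ exactly matching the definitions in the statement; the cross term yields $(\tilde{A}+\bar{B}K_{e})^{T} P_{e}\Omega_{e} - \Omega_{e1}$ after collecting the $u^{T}Ru$ cross piece and the $P_{e}$ cross piece, while the $\tilde{e}^{T}\tilde{e}$ quadratic merges $\mathrm{diag}\{K_{ei}^{T}R_{i}K_{ei}\}$ with $\Omega_{e2} = \Omega_{e}^{T}P_{e}\Omega_{e}$. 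Summing from $k=s$ to $N$, telescoping gives $\sum_{k=s}^{N} L(k) = V(s) - V(N{+}1) + \sum_{k=s}^{N}[\,\cdot\,]$. Theorem \ref{main-result-2} guarantees that $\bar{A}_{ec}$ is stable (block triangular with both diagonal blocks stable), so $e(N{+}1),\tilde{e}(N{+}1) \to 0$ and $V(N{+}1) \to 0$. Passing to the limit yields \eqref{cost-function-distributed-control-error-homo}, and subtracting $J^{*}(s,\infty) = e^{T}(s) P_{e} e(s)$ gives \eqref{cost-difference-error-1}.

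Finally, asymptotic optimality follows from exponential decay: since $\bar{A}_{ec}$ is Schur, there exist $c>0$ and $r\in(0,1)$ with $\|[e(k);\tilde{e}(k)]\| \le c r^{k-s}\|[e(s);\tilde{e}(s)]\|$, so the tail integrand in \eqref{cost-difference-error-1} is bounded by $c'\, r^{2(k-s)}$, and $\Delta J(s,\infty) = O(r^{2s}) \to 0$. The main obstacle I anticipate is the bookkeeping in step three: correctly identifying that the $-2 e^{T}\Omega_{e1}\tilde{e}$ term arising from $u^{*T}Ru^{*}$ combines cleanly with $2e^{T}(\tilde{A}+\bar{B}K_{e})^{T}P_{e}\Omega_{e}\tilde{e}$ from $V(k+1)$ to reproduce exactly the author's $M_{e1}$, rather than some messier expression; but this is a routine verification once one is careful about how $K_{e}$ decomposes as $[K_{e1}^{T},\dots,K_{eN}^{T}]^{T}$ and how $\bar{B}$ stacks column-wise into $[\bar{B}_{1},\dots,\bar{B}_{N}]$.
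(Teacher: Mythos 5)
Your proposal is correct and follows essentially the same route as the paper's proof: the completion-of-squares identity built from $V(k)=e^{T}(k)P_{e}e(k)$ and the ARE, telescoping with the boundary term killed by the stability established in Theorem \ref{main-result-2}, and the geometric decay bound $\Delta J(s,\infty)=O(\gamma^{2s})$ for asymptotic optimality. The bookkeeping you flag as the main risk does work out exactly as you describe, reproducing the paper's $M_{e1}$ and $M_{e2}$.
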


\begin{proof}
According to \eqref{algebra-riccati-equation-error} and \eqref{gloabl-error-closed-loop-system},
  we have
  \begin{align*}
  &e^{T}(k)P_{e}e(k)-e^{T}(k+1)P_{e}e(k+1)
  \notag\\
  &=e^{T}(k)(\mathcal{Q}+K_{e}^{T}RK_{e})e(k)
\notag\\
  &\quad -\tilde{e}^{T}(k)\Omega_{e}^{T}P_{e}(\tilde{A}+\bar{B}K_{e})e(k)
  \notag\\
  &\quad -e^{T}(k)(\tilde{A}+\bar{B}K_{e})^{T}P_{e}\Omega_{e}\tilde{e}(k)
  -\tilde{e}^{T}(k)\Omega_{e}^{T}P_{e}\Omega_{e} \tilde{e}(k)
  \end{align*}
  Based on the cost function \eqref{cost-function-error} under the centralized optimal control,
   by performing summation on $k$ from $s$ to $\infty$ and applying algebraic calculations yields
  \begin{align}
  &e^{T}(s)P_{e}e(s)-e^{T}(\infty)P_{e}e(\infty)
  \notag\\
  &=J(s,\infty)-\sum_{k=s}^{\infty}\begin{bmatrix}
e(k)\\
\tilde{e}(k)
\end{bmatrix}^{T}
\begin{bmatrix}
0 & M_{e1}\\
M_{e1}^{T} & M_{e2}
\end{bmatrix}
\begin{bmatrix}
e(k)\\
\tilde{e}(k)
\end{bmatrix}
  \end{align}
 It follows from Theorem \ref{main-result-2} that
 $\lim_{k\to \infty} e^{T}(k)P_{e}e(k)=0$.
 Therefore,
 the corresponding optimal cost function $J^{\star}(s,\infty)$ under the  proposed distributed observer-based controller \eqref{error-controller}
  is derived in \eqref{cost-function-distributed-control-error-homo}.
 Furthermore, in line with \eqref{cost-function-centralized-control-error-feedback},
 the optimal cost difference \eqref{cost-difference-error-1} holds.
 \\
According to Theorem \ref{main-result-2}, the closed-loop system \eqref{closed-loop-error-system-1} is stable,
then there exist two constants $a>0$ and $0<\gamma<1$ such that
\begin{align}
\left\|\begin{bmatrix}
e(k)\\
\tilde{e}(k)
\end{bmatrix}\right\|\leq  a\gamma^{k}\left\|\begin{bmatrix}
e(0)\\
\tilde{e}(0)
\end{bmatrix}\right\|.
\end{align}
Then, we have
\begin{align}\label{cost-difference-estimtaor}
 \Delta J_{e}(s,\infty)
 &=\sum_{k=s}^{\infty}\begin{bmatrix}
e(k)\\
\tilde{e}(k)
\end{bmatrix}^{T}
\begin{bmatrix}
0 & M_{e1}\\
M_{e1}^{T} & M_{e2}
\end{bmatrix}
\begin{bmatrix}
e(k)\\
\tilde{e}(k)
\end{bmatrix}
\notag\\
&\leq \sum_{k=s}^{\infty}
\left\|
\begin{bmatrix}
0 & M_{e1}\\
M_{e1}^{T} & M_{e2}
\end{bmatrix}
\right\|
\left\|\begin{bmatrix}
e(k)\\
\tilde{e}(k)
\end{bmatrix}\right\|^{2}
\notag\\
&\leq \left\|
\begin{bmatrix}
0 & M_{e1}\\
M_{e1}^{T} & M_{e2}
\end{bmatrix}
\right\|
\left\|\begin{bmatrix}
e(0)\\
\tilde{e}(0)
\end{bmatrix}\right\|^{2}\sum_{k=s}^{\infty}a^{2}\gamma^{2k}
\notag\\
&=\bar{a}\gamma^{2s}
\end{align}
with $\bar{a}= \left\|
\begin{bmatrix}
0 & M_{e1}\\
M_{e1}^{T} & M_{e2}
\end{bmatrix}
\right\|
\left\|\begin{bmatrix}
e(0)\\
\tilde{e}(0)
\end{bmatrix}\right\|^{2}\frac{a^{2}}{1-\gamma^{2}}$.
\\
Since $0<\gamma<1$,
for any given $\varepsilon>0$,
there exists a sufficiently large integer $M$ such that
$\gamma^{2M}<\frac{\varepsilon}{\bar{a}}$.
Based on \eqref{cost-difference-estimtaor},
it holds that
\begin{align}
\sum_{k=M}^{\infty}\begin{bmatrix}
e(k)\\
\tilde{e}(k)
\end{bmatrix}^{T}
\begin{bmatrix}
0 & M_{e1}\\
M_{e1}^{T} & M_{e2}
\end{bmatrix}
\begin{bmatrix}
e(k)\\
\tilde{e}(k)
\end{bmatrix}<\varepsilon.
\end{align}
In this case, when $M$ is large enough, the cost difference \eqref{cost-difference-error-1} satisfies
\begin{align*}
\Delta J(M,\infty)<\varepsilon.
\end{align*}
That is to say, the optimal cost difference \eqref{cost-difference-error-1} is equal to 0 as $s\to \infty$.
This proof is completed.
\end{proof}

\subsection{Comparison with traditional consensus algorithms}
Firstly, the consensus with the new controller has a faster convergence speed than the traditional consensus algorithms.

In fact,
from the closed-loop system \eqref{closed-loop-error-system-1},
  one has
\begin{align}\label{norm-estimator-error}
\left\|\begin{bmatrix}
e(k)\\
\tilde{e}(k)
\end{bmatrix}\right\|\leq \rho(\bar{A}_{ec})\left\|\begin{bmatrix}
e(k-1)\\
\tilde{e}(k-1)
\end{bmatrix}\right\|,
\end{align}
where $\rho(\bar{A}_{ec})$ is the spectra radius of  $\tilde{A}+\bar{B}K_{e}$ and $\tilde{A}_{ec}$.
In particularly,
 $\tilde{A}+\bar{B}K_{e}$ is the closed-loop system matrix obtained by the optimal feedback control \eqref{feedback-gain-matrix-Ke},
 that is,
 $e(k+1)=(\tilde{A}+\bar{B}K_{e})e(k)$ while $e(k+1)^{T}\tilde{Q}e(k+1)$  is minimized as in \eqref{cost-function-error},
so the modulus of the eigenvalues for $\tilde{A}+\bar{B}K_{e}$ is minimized in certain sense.
Besides, based on  the optimization in \eqref{choose-gamma} , we can appropriately select $\Upsilon_{i}$
such that the upper bound of the spectral radius $\rho( \tilde{A}_{ec})$ is as small as possible.
From these perspectives, $\rho(\bar{A}_{ec})$ is made more to be small. This is in comparison with the conventional consensus algorithms where
 the maximum eigenvalue of the matrix $\bar{A}_{ec}$ is not minimized and determined by the eigenvalues of the Laplacian matrix $\mathcal{L}$.
Therefore, it can be expected that the proposed approach can achieve a faster convergence than the conventional algorithms
 as demonstrated in the simulation examples in Section \ref{sec:example}.


Second, the cost difference $\Delta J(s,\infty)$ between the new distributed controller \eqref{error-controller} and the centralized optimal control \eqref{centralized-optimal-control-error}
is provided in Theorem \ref{cost-difference-theorem-error-1},
and it is equal to zero as $s\to\infty$.
 That is to say,  the corresponding cost function under the proposed distributed controllers \eqref{error-controller} is asymptotically optimal.

\subsection{Special case: consensus of multi-agent systems \eqref{homogeneous multi-agent system} via state feedback controller}


By stacking the state vectors, the multi-agent systems \eqref{homogeneous multi-agent system} can be rewritten as
\begin{align}\label{compact-system-homogeneous-1}
X(k+1)=\tilde{A}X(k)+\tilde{B}u(k)
\end{align}
where $X(k)=\begin{bmatrix}
x_{1}^{T}(k),\cdots,x_{N}^{T}(k)
\end{bmatrix}^{T}$ is the global state variable.
$\tilde{A}=I_{N}\otimes A$ and
$\tilde{B}=diag\{B_{1},B_{2},\cdots,B_{N}\}$.

 The cost function  \eqref{cost-function} is rewritten as
\begin{align}\label{cost-function-special-case}
J(s,\infty)
=\sum_{k=s}^{\infty}[X^{T}(k)\mathcal{Q}X(k)+u^{T}(k)Ru(k)]
\end{align}
where  $\mathcal{Q}=[\mathcal{Q}]_{ij}\geq 0$ with $\mathcal{Q}_{ii}=(N-1)Q$,
 $\mathcal{Q}_{ij}=-Q$ for $i\neq j$, and $R=diag\{R_{1},R_{2},\cdots,R_{N}\}>0$.


 \begin{lemma}\label{LQR-control-state}
Assume the state information $X(k)$ is available for all agents subject to \eqref{homogeneous multi-agent system},
the optimal controller with respect to the cost function \eqref{cost-function} is given by:
\begin{align}\label{centralized-optimal-control}
u^{*}(k)=KX(k),
\end{align}
where the feedback gain matrix $K=\mathcal{L}\otimes F$ is given as
\begin{align}\label{feedback-gain-matrix-K}
K=-(R+\tilde{B}^{T}P\tilde{B})^{-1}\tilde{B}^{T}P\tilde{A}
\end{align}
and $P$ is the solution of the following ARE
\begin{align}\label{algebra-riccati-equation}
P=\tilde{A}^{T}P\tilde{A}+\mathcal{Q}-\tilde{A}^{T}P\tilde{B}(R+ \tilde{B}^{T}P\tilde{B})^{-1}\tilde{B}^{T}P\tilde{A}.
\end{align}
The corresponding optimal cost function is
\begin{align}\label{cost-function-centralized-control}
J^{*}(s,\infty)=X^{T}(s)PX(s).
\end{align}
Moreover,  if $P$ is  the unique positive definite solution to \eqref{algebra-riccati-equation}, $\tilde{A}+\tilde{B}K$ is stable.
\end{lemma}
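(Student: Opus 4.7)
The plan is to recognize Lemma \ref{LQR-control-state} as a direct instance of the classical discrete-time infinite-horizon LQR theorem applied to the lifted system \eqref{compact-system-homogeneous-1} with quadratic cost \eqref{cost-function-special-case}. Once the problem is rewritten in compact form, it is structurally identical to Lemma \ref{LQR-control-error} with $(\tilde A,\tilde B,\mathcal Q)$ in place of $(\tilde A,\bar B,\tilde Q)$, so the proof proceeds along exactly the same lines and I would simply invoke \cite{Anderson1971} at the end.

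First I would verify the equivalence of the two cost representations. Expanding $\sum_{i}\sum_{j\in N_i}(x_i(k)-x_j(k))^{T}Q(x_i(k)-x_j(k))$ and regrouping the coefficients of $x_i^{T}Qx_i$ and $x_i^{T}Qx_j$ yields precisely the block matrix $\mathcal Q$ described after \eqref{cost-function-special-case}, so $\mathcal Q\geq 0$ with a Laplacian-like structure whose null space contains the consensus subspace. With this reformulation, the centralized problem is a textbook LQR problem.

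Next I would apply the standard dynamic programming argument: conjecture a value function of the form $V_k(X)=X^{T}P_k X$, substitute it into the Bellman recursion
\begin{equation*}
V_k(X)=\min_{u}\bigl\{X^{T}\mathcal Q X+u^{T}Ru+V_{k+1}(\tilde A X+\tilde B u)\bigr\},
\end{equation*}
and complete the square in $u$ to extract the minimizer $u^{\ast}=-(R+\tilde B^{T}P_{k+1}\tilde B)^{-1}\tilde B^{T}P_{k+1}\tilde A\,X$. Matching coefficients then gives the Riccati recursion that becomes the algebraic equation \eqref{algebra-riccati-equation} in the stationary (infinite-horizon) limit, and consequently the feedback gain \eqref{feedback-gain-matrix-K} and the optimal cost \eqref{cost-function-centralized-control} follow.

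The closing claim that $\tilde A+\tilde B K$ is Schur whenever $P$ is the unique positive definite solution is the standard closed-loop stability property of the discrete-time ARE. The main obstacle I anticipate is the detectability check underlying this step: because the null space of $\mathcal Q$ is exactly the consensus subspace and $\tilde A=I_N\otimes A$ leaves this subspace invariant, detectability of $(\tilde A,\mathcal Q^{1/2})$ is not automatic and must be related to the Schur property of $A$ restricted off the consensus direction. However, since the lemma already hypothesizes that $P$ is the \emph{unique positive definite} solution, the existence and uniqueness issues are absorbed into the assumption, and the stability conclusion reduces to a direct appeal to the classical result of \cite{Anderson1971}. Finally, the structural form $K=\mathcal L\otimes F$ stated in the lemma would be verified by substituting an ansatz $P=\Pi\otimes\bar P$ into \eqref{algebra-riccati-equation} and exploiting $\tilde A=I_N\otimes A$ together with the Laplacian-like form of $\mathcal Q$ to reduce the block ARE to a scalar-block one.
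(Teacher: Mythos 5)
Your proposal matches the paper's treatment: the paper states this lemma without proof, treating it exactly as you do --- as the classical discrete-time infinite-horizon LQR result of \cite{Anderson1971} applied to the lifted system \eqref{compact-system-homogeneous-1} with cost \eqref{cost-function-special-case} --- and your completion-of-squares/Bellman derivation of \eqref{feedback-gain-matrix-K}, \eqref{algebra-riccati-equation}, \eqref{cost-function-centralized-control} and the closed-loop Schur property is the standard argument. Your remark that detectability of $(\tilde A,\mathcal Q^{1/2})$ is delicate because the consensus subspace lies in the kernel of $\mathcal Q$ is a genuine and worthwhile observation that the paper glosses over.

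The one step of your plan that would not go through as written is the verification of the structural claim $K=\mathcal L\otimes F$ via the ansatz $P=\Pi\otimes\bar P$. Here $\tilde B=\mathrm{diag}\{B_1,\dots,B_N\}$ and $R=\mathrm{diag}\{R_1,\dots,R_N\}$ with agent-dependent blocks, so $\tilde B^{T}P\tilde B$ and hence $K$ cannot in general inherit a Kronecker factorization; moreover, even in the homogeneous case $B_i\equiv B$, $R_i\equiv R_0$, the weight $\mathcal Q=(NI_N-\mathbf{1}\mathbf{1}^{T})\otimes Q$ is built from the complete-graph Laplacian, not from the communication Laplacian $\mathcal L$, and the factor $(I_N\otimes R_0+\Pi\otimes B^{T}\bar PB)^{-1}$ destroys the product form unless $\Pi$ is very special. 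This is an imprecision in the lemma's statement rather than in your core argument (the paper's own numerical example yields gains $K_i$ that are not of the form $\mathcal L\otimes F$), so the honest fix is to drop or weaken the $K=\mathcal L\otimes F$ claim rather than to try to force the ansatz.
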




The system \eqref{compact-system-homogeneous-1} is rewritten as
\begin{align}\label{compact-system-homogeneous}
X(k+1)&=\tilde{A}X(k)+\sum_{i=1}^{N}\tilde{B}_{i}u_{i}(k)
\\
Y_{i}(k)&=C_{i}X(k)
\end{align}
where $\tilde{B}_{i}=\begin{bmatrix}
0 &  \cdots & B^{T} & 0 &\cdots & 0
\end{bmatrix}^{T}$,
  $Y_{i}(k)$ is measurement, $C_{i}$ is composed of $0$ and $I_{n}$, which is determined by the interaction among agents.

We design the distributed observer-based state feedback controllers:
\begin{align}\label{sate-feedback-controller}
u_{i}^{*}(k)&=K_{i}\hat{X}_{i}(k),\quad i=1,2\cdots,N
\end{align}
where the distributed observers $\hat{X}_{i}(k)$ are given by
\begin{align}\label{observer-state}
\hat{X}_{i}(k+1)&=\tilde{A}\hat{X}_{i}(k)+\tilde{B}_{1}K_{1}\hat{X}_{i}(k)+\cdots
\notag\\
&\quad+\tilde{B}_{i-1}K_{i-1}\hat{X}_{i}(k)+\tilde{B}_{i}u_{i}^{*}(k)
\notag\\
&\quad +\cdots+\tilde{B}_{N}K_{N}\hat{X}_{N}(k)
\notag\\
&\quad +L_{i}(Y_{i}(k)-C_{i}\hat{X}_{i}(k))
\end{align}
%
with $L_{i}$  the observer gains to be designed,
and $K_{i}=\begin{bmatrix}
0 & 0 &\cdots & I &\cdots & 0
\end{bmatrix}K$.


\begin{theorem}\label{main-result-1}
Let Assumption \ref{graph-assumption} hold. Consider the global system \eqref{compact-system-homogeneous} for the multi-agent systems \eqref{homogeneous multi-agent system},
and the control laws  \eqref{sate-feedback-controller},
if there exist observer gains $L_{i}$ such that the matrix
\begin{align}
\tilde{A}_{c}=\begin{bmatrix}
W_1 & -\tilde{B}_{2}K_{2} &  \cdots & -\tilde{B}_{N}K_{N}\\
-\tilde{B}_{1}K_{1} & W_2 &  \cdots & -\tilde{B}_{N}K_{N}\\
\vdots & \vdots & \ddots & \vdots \\
-\tilde{B}_{1}K_{1} & -\tilde{B}_{2}K_{2}& \cdots & W_N
\end{bmatrix}
\end{align}
is stable with $W_{i}=\tilde{A}+\tilde{B}K-\tilde{B}_{i}K_{i}-L_{i}C_{i}$.
Then the observers \eqref{observer-state} are stable, that is,
\begin{align}\label{observer-error-vector}
\lim_{k\to \infty}\|\hat{X}_{i}(k)-X(k)\|=0, i=1,\cdots,N.
\end{align}
Moreover, if $P$ is the positive definite solution of \eqref{algebra-riccati-equation}, under the feedback controller \eqref{sate-feedback-controller} ,
the multi-agent systems \eqref{homogeneous multi-agent system} can achieve consensus.
\end{theorem}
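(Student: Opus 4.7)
The plan is to mirror the argument used in Theorem \ref{main-result-2}, adapted from the error variable $e(k)$ to the state variable $X(k)$. First I would introduce the observer error $\tilde{X}_i(k) = X(k) - \hat{X}_i(k)$ and stack these into $\tilde{X}(k) = [\tilde{X}_1^T(k),\ldots,\tilde{X}_N^T(k)]^T$. The goal of the first half of the proof is to show that $\tilde{X}(k+1) = \tilde{A}_c \tilde{X}(k)$, so the assumed stability of $\tilde{A}_c$ immediately yields \eqref{observer-error-vector}.

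To obtain that identity I would simplify the observer equation \eqref{observer-state}. Using $u_i^{*}(k) = K_i \hat{X}_i(k)$, the terms $\tilde{B}_1 K_1 \hat{X}_i(k) + \cdots + \tilde{B}_{i-1}K_{i-1}\hat{X}_i(k) + \tilde{B}_i u_i^{*}(k) + \cdots + \tilde{B}_N K_N \hat{X}_i(k)$ collapse into $\tilde{B} K \hat{X}_i(k)$, since $\tilde{B}K = \sum_{j=1}^{N} \tilde{B}_j K_j$ by the definitions of $\tilde{B}_j$ and $K_j$. Thus $\hat{X}_i(k+1) = (\tilde{A} + \tilde{B}K - L_i C_i)\hat{X}_i(k) + L_i C_i X(k)$. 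Subtracting this from \eqref{compact-system-homogeneous} and using $\hat{X}_j(k) = X(k) - \tilde{X}_j(k)$ to rewrite the cross-control terms, the $X(k)$ contributions cancel and the $i$-th row becomes $W_i \tilde{X}_i(k) - \sum_{j \ne i} \tilde{B}_j K_j \tilde{X}_j(k)$, which is exactly the $i$-th block of $\tilde{A}_c \tilde{X}(k)$.

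Having established observer convergence, I would turn to the global closed-loop dynamics. Writing $u_i^*(k) = K_i(X(k) - \tilde{X}_i(k))$ in \eqref{compact-system-homogeneous} gives $X(k+1) = (\tilde{A}+\tilde{B}K)X(k) + \Omega\tilde{X}(k)$, where $\Omega = [-\tilde{B}_1K_1, \ldots, -\tilde{B}_N K_N]$. Combined with the $\tilde{X}$ dynamics, the augmented state $[X^T(k), \tilde{X}^T(k)]^T$ evolves under the upper block-triangular matrix $\bar{A}_c = \begin{bmatrix} \tilde{A}+\tilde{B}K & \Omega \\ 0 & \tilde{A}_c \end{bmatrix}$. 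By Lemma \ref{LQR-control-state}, the positive definiteness of $P$ ensures that $\tilde{A}+\tilde{B}K$ is stable; together with the assumed stability of $\tilde{A}_c$, the matrix $\bar{A}_c$ is stable, which in the LQ setting with cost weight $\mathcal{Q}$ penalizing disagreement implies consensus of the multi-agent system \eqref{homogeneous multi-agent system}.

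The main obstacle I anticipate is purely bookkeeping at the second step: one must verify that after substituting $u_i^*(k) = K_i\hat{X}_i(k)$ the observer's ``centralized control'' terms regroup exactly into $\tilde{B}K\hat{X}_i(k)$ and that the cross-coupling in the resulting $\tilde{X}_i(k+1)$ expression lines up precisely with the off-diagonal blocks $-\tilde{B}_j K_j$ of $\tilde{A}_c$. Once this algebraic identification is confirmed, stability of the observer errors and of the closed-loop state follow from the triangular structure and the two hypotheses, completing the proof.
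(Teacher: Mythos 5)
Your proposal is correct and follows essentially the same route as the paper: the paper omits the details of this proof, stating only that it parallels Theorem \ref{main-result-2}, and your reconstruction (stacking $\tilde{X}_i = X - \hat{X}_i$, collapsing $\sum_j \tilde{B}_j K_j$ into $\tilde{B}K$, obtaining $\tilde{X}(k+1)=\tilde{A}_c\tilde{X}(k)$, and then the block upper-triangular augmented system with diagonal blocks $\tilde{A}+\tilde{B}K$ and $\tilde{A}_c$) is exactly that parallel argument. Note only that your algebra implicitly (and correctly) reads the term $\tilde{B}_N K_N \hat{X}_N(k)$ in \eqref{observer-state} as $\tilde{B}_N K_N \hat{X}_i(k)$, consistent with the structure of \eqref{observer-2}.
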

\begin{proof}
This proof is similar to that in Theorem \ref{main-result-2}. So we will
not repeat the details.
\end{proof}

Through analyzing Theorem \ref{main-result-1}, under the distributed state feedback controllers  \eqref{sate-feedback-controller} and \eqref{observer-state},
all agents can achieve consensus, and the state of each agent converges to zero, which is  also consistent with the single system's result \cite{Xu2023}.

Similar to Theorem \ref{cost-difference-theorem-error-1}, we will also discuss asymptotical optimal property of the
new distributed controllers \eqref{sate-feedback-controller}.
\begin{theorem}\label{cost-difference-theorem}
 Under the proposed distributed controller \eqref{sate-feedback-controller} and \eqref{observer-state} with $L_{i},i=1,2,\cdots,N$ chosen from
Theorem \ref{main-result-1},
the cost function is given by
\begin{align}\label{cost-function-distributed-control}
&J^{\star}(s,\infty)
\notag\\
&=X^{T}(s)PX(s)+\sum_{k=s}^{\infty}\begin{bmatrix}
X(k)\\
\tilde{X}(k)
\end{bmatrix}^{T}
\begin{bmatrix}
0 & M_{1}\\
M_{1}^{T} & M_{2}
\end{bmatrix}
\begin{bmatrix}
X(k)\\
\tilde{X}(k)
\end{bmatrix},
\end{align}
where
\begin{align*}
M_{1}&=(\tilde{A}+\tilde{B}K)^{T}P\Omega-
\begin{bmatrix}
K_{1}^{T}R_{1}K_{1} &  \cdots & K_{N}^{T}R_{N}K_{N}
\end{bmatrix},
\notag\\
M_{2}&=\begin{bmatrix}
K_{1}^{T}R_{1}K_{1} &  0 &\cdots & 0\\
0  & K_{2}^{T}R_{2}K_{2} &\cdots & 0\\
\vdots & \vdots& \ddots & \vdots\\
0 & 0 &\cdots & K_{N}^{T}R_{N}K_{N}
\end{bmatrix}+\Omega^{T}P\Omega.
\end{align*}
Moreover, the cost difference between the cost function \eqref{cost-function-distributed-control} and the optimal cost \eqref{cost-function-centralized-control}
is given by
\begin{align}\label{cost-difference}
 \Delta J(s,\infty)&=J^{\star}(s,\infty)-J^{*}(s,\infty)
 \notag\\
 &=\sum_{k=s}^{\infty}\begin{bmatrix}
X(k)\\
\tilde{X}(k)
\end{bmatrix}^{T}
\begin{bmatrix}
0 & M_{1}\\
M_{1}^{T} & M_{2}
\end{bmatrix}
\begin{bmatrix}
X(k)\\
\tilde{X}(k)
\end{bmatrix}.
\end{align}
Specially, when $s$ is sufficiently large, the cost difference will be equal to zero,
i.e., the  proposed  consensus controller \eqref{sate-feedback-controller} can achieve the optimal cost (asymptotically).
\end{theorem}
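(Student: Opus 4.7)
The plan is to mirror the argument of Theorem \ref{cost-difference-theorem-error-1} for the state-feedback setting, with $X(k)$ and the stacked observer errors $\tilde{X}(k)=[\tilde{X}_1^T(k),\ldots,\tilde{X}_N^T(k)]^T$, where $\tilde{X}_i(k)=X(k)-\hat{X}_i(k)$, playing the roles previously played by $e(k)$ and $\tilde{e}(k)$. First I would substitute the controllers \eqref{sate-feedback-controller} into \eqref{compact-system-homogeneous} and the observers \eqref{observer-state}, obtaining the closed-loop pair
\begin{align*}
X(k+1) &= (\tilde{A}+\tilde{B}K)X(k) + \Omega\,\tilde{X}(k),\\
\tilde{X}(k+1) &= \tilde{A}_c\,\tilde{X}(k),
\end{align*}
where $\Omega=[-\tilde{B}_1 K_1,\ldots,-\tilde{B}_N K_N]$. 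This is the exact analogue of \eqref{closed-loop-error-system-1}, and by Theorem \ref{main-result-1} it is stable.

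Next I would exploit the ARE \eqref{algebra-riccati-equation} together with the identity $P-(\tilde{A}+\tilde{B}K)^T P(\tilde{A}+\tilde{B}K) = \mathcal{Q}+K^T R K$ to expand $X^T(k)PX(k)-X^T(k+1)PX(k+1)$. Substituting $X(k+1)=(\tilde{A}+\tilde{B}K)X(k)+\Omega\tilde{X}(k)$ produces one term $X^T(k)(\mathcal{Q}+K^T R K)X(k)$, one cross term $2X^T(k)(\tilde{A}+\tilde{B}K)^T P\Omega\,\tilde{X}(k)$, and one quadratic observer-error term $\tilde{X}^T(k)\Omega^T P\Omega\,\tilde{X}(k)$. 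Comparing this to the running cost $X^T(k)\mathcal{Q}X(k)+u^{*T}(k)Ru^*(k)$ with $u^*(k)=KX(k)+\text{(block corrections involving }\tilde{X}_i(k))$, the discrepancy collects precisely into the block $\begin{bmatrix}0&M_1\\M_1^T&M_2\end{bmatrix}$, with $M_1,M_2$ as stated.

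Summing the telescope from $k=s$ to $k=\infty$ and using Theorem \ref{main-result-1} to kill the boundary term $\lim_{k\to\infty} X^T(k)PX(k)=0$ gives \eqref{cost-function-distributed-control}. Subtracting the centralized optimal cost \eqref{cost-function-centralized-control} yields the difference \eqref{cost-difference}. Finally, for the asymptotic optimality, I would copy the geometric-decay argument used in \eqref{cost-difference-estimtaor}: stability of the block-triangular closed-loop matrix $\begin{bmatrix}\tilde{A}+\tilde{B}K & \Omega\\ 0 & \tilde{A}_c\end{bmatrix}$ provides constants $a>0$, $\gamma\in(0,1)$ with $\|[X(k)^T,\tilde{X}(k)^T]^T\|\le a\gamma^k\|[X(0)^T,\tilde{X}(0)^T]^T\|$, from which $\Delta J(s,\infty)\le \bar a\,\gamma^{2s}\to 0$ as $s\to\infty$.

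The only real obstacle is bookkeeping in the telescoping step: the distributed controller's contribution $u^*(k)$ differs from the centralized $KX(k)$ by block terms $-K_i\tilde{X}_i(k)$, so the Riccati identity does \emph{not} close up cleanly on $X(k)$ alone, and one has to group the resulting cross products so that they match the prescribed $M_1$ and $M_2$. Everything else, in particular the stability conclusions and the geometric bound, transfers verbatim from the proof of Theorem \ref{cost-difference-theorem-error-1}, which is why the authors state the proof is analogous.
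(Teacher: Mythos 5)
Your proposal is correct and follows essentially the same route as the paper, whose own proof of this theorem simply defers to the argument of Theorem \ref{cost-difference-theorem-error-1}; your transcription of that argument to the state-feedback setting (closed-loop pair for $(X,\tilde{X})$, the closed-loop ARE identity, the telescoping sum with the cross terms collected into $M_1$ and $M_2$, and the geometric-decay bound for asymptotic optimality) is exactly what is intended, and the bookkeeping you flag does work out as you describe.
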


\begin{proof}
This proof is similar to that in Theorem \ref{cost-difference-theorem-error-1}.
So the details are omitted.
\end{proof}

\section{Numerical Simulation}\label{sec:example}

In this section, we  validate the  the proposed theoretical results through the following numerical examples.


\begin{example}
\begin{figure}[!htbp]
\centering
\includegraphics[width=1in]{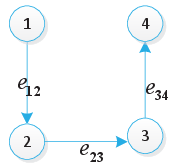}
\caption{Communication topology among four agents}
\label{fig:2}
\end{figure}
Consider the multi-agent system consisting of four homogeneous agents with
the system matrices taken from \cite{Chen2021},
\begin{align}
A=\begin{bmatrix}
1.1 & 0.3\\
0 & 0.8
\end{bmatrix},
B=\begin{bmatrix}
1 \\
0.5
\end{bmatrix}.
\end{align}
The interactions of agents are given in Fig.\ref{fig:2},
 in which each agent   receives neighbor error information, Then, we can determine $H_{i},i=1,2,3,4$ as
\begin{align*}
H_{1}&=\begin{bmatrix}
I_{2} & 0 & 0\\
\end{bmatrix},
H_{2}=\begin{bmatrix}
  0 & I_{2} & 0\\
\end{bmatrix},
\notag\\\
H_{3}&=\begin{bmatrix}
  0 & I_{2} & 0\\
\end{bmatrix},
H_{4}=0.
\end{align*}
We choose
\begin{align*}
Q=I_{2}, R_{1}=R_{2}=R_{3}=R_{4}=1.
\end{align*}
According to ARE  \eqref{algebra-riccati-equation-error} and the optimization in  \eqref{choose-gamma},
 the feedback gains $K_{ei}$ and the observer gain can be obtained, respectively.
Fig.\ref{fig:2-1} displays the evolution of each agent's state by the proposed consensus algorithm(M2),
it's shown that all agent's states reach the consensus value  after 12 steps.
The corresponding observer error vector $e_{1}(k)-\hat{e}_{1}(k)$ under the proposed controller \eqref{error-controller} converges to zero.
With the same initial conditions, Fig.\ref{fig:2-3} shows the state trajectories by the traditional state feedback method (M1) \cite{Olfati-Saber2004}.
One can see that the second state of each agent reach consensus after 25steps.
To further compare the convergence performance of different consensus algorithms,
the quantitative calculations based on the spectral radius $\rho(\tilde{A}_{ec})$
and the norm of the first agent's state at different instants of time are shown in Table \ref{Table-1}.
It can be observed that the proposed consensus algorithm reduces the maximum eigenvalue's value of $\tilde{A}_{c}$,
 and the norm of each agent's state $\|x_{i}(k_{0})\|$.
 Therefore,  the proposed distributed  observer-based consensus algorithm \eqref{error-controller} can ensure all agents reach consensus
 with a faster convergence speed.

\begin{figure}[!htbp]
\centering
\includegraphics[width=2.5in]{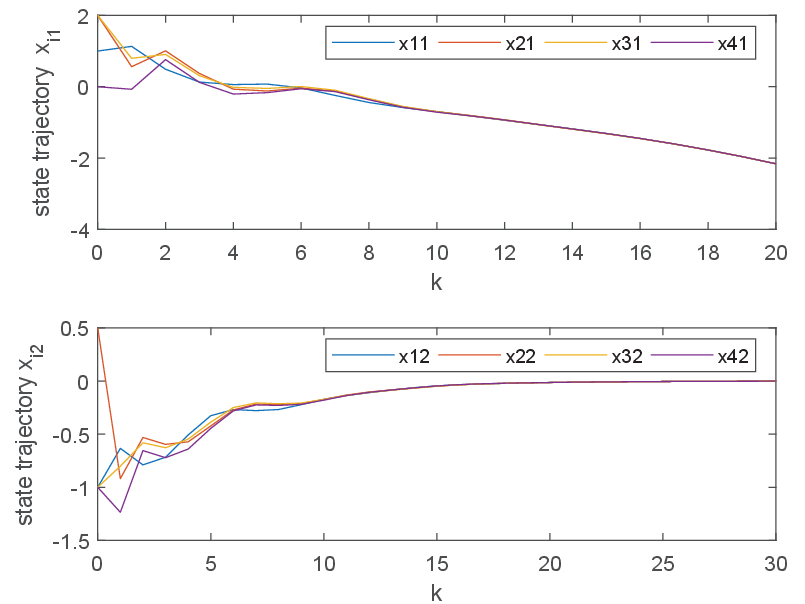}
\caption{The state trajectories for each agent $x_{i}(k), i=1,2,3,4$.}
\label{fig:2-1}
\end{figure}

\begin{figure}[!htbp]
\centering
\includegraphics[width=2.5in]{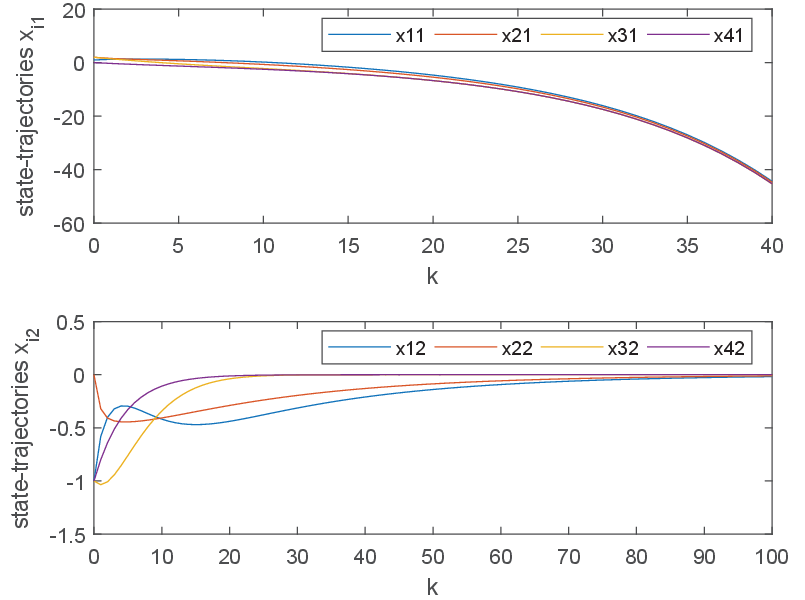}
\caption{The state trajectories $x_{i}(k)$ by the traditional consensus method.}
\label{fig:2-3}
\end{figure}
\begin{figure}[!htbp]
\centering
\includegraphics[width=2.5in]{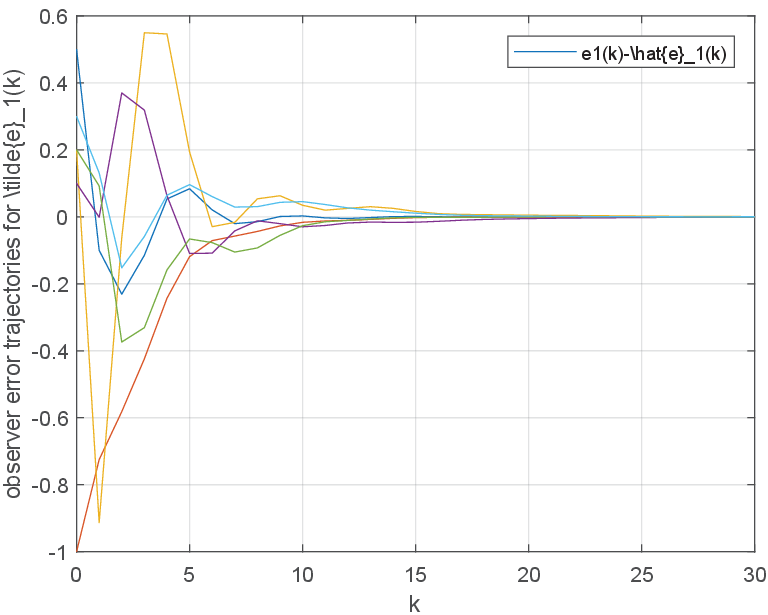}
\caption{Observer error trajectories $\tilde{e}_{1}(k)$.}
\label{fig:2-2}
\end{figure}

 \begin{table*}[t]
\centering
\caption{$\rho(\tilde{A}_{ec})$ and the norm of the first agent's state by using different algorithms}.
\label{table1}
\begin{tabular}{|c|c|c|c|c|c|c|c|c|c|c|c|c|}
\hline
\multirow{2}{*}{Method} &\multirow{2}{*}{$\rho(\tilde{A}_{ec})$} & \multicolumn{11}{c|}{$\|x_{1}\|_{2}$}\\
\cline{3-13}
  & & step1 & step2 & step4 & step6 & step8 & step10 & step12 & step14 & step16 & step18  & step20\\
\hline
M1 & 0.8777	& 1.4142 & 1.3670  & 1.3639  & 1.2424 & 0.9817  &  0.6218 &   0.4494  & 0.9513  &  1.7692 & 2.7805  &  3.9881  \\
\hline
M2 &0.7876	& 1.4142 & 1.2972 & 0.7293    & 0.3349  & 0.3702  & 0.6243 & 0.8271   & 1.0638  & 1.3096  & 1.6056   & 1.9587  \\
\hline
\end{tabular}
\label{Table-1}
\end{table*}
\end{example}

\begin{example}
 \begin{figure}[!htbp]
\centering
\includegraphics[width=1in]{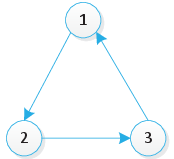}
\caption{Communication topology among three agents}
\label{fig:1}
\end{figure}

We first consider a multi-agent system that consists of three agents over a directed graph described in Fig.\ref{fig:1}.
The system dynamic parameters are set as
 \begin{align*}
 A&=1,
 B_1=\begin{bmatrix}
 1.5 & 0.5
 \end{bmatrix},
 B_{2}=\begin{bmatrix}
 0.8 & 1
 \end{bmatrix},
 B_{3}=\begin{bmatrix}
 1& -0.2
 \end{bmatrix},
 \notag\\
 C_{1}&=\begin{bmatrix}
1 & 0 & 0\\
0 & 0 & 1
\end{bmatrix},
C_{2}=\begin{bmatrix}
1 & 0 & 0\\
0 & 1 & 0
\end{bmatrix},
C_{3}=\begin{bmatrix}
0 & 1 & 0\\
0 & 0 & 1
\end{bmatrix},
\notag\\
Q&=1, R_1=R_2=R_3=1.
 \end{align*}
By solving ARE \eqref{algebra-riccati-equation} and according to Theorem \ref{main-result-1}, the feedback gains in \eqref{feedback-gain-matrix-K} are obtained
\begin{align*}
K_1&=\begin{bmatrix}
   -0.3935  & -0.0000  &    -0.0000 \\
   -0.1312 &  -0.0000 &   -0.0000
    \end{bmatrix},
\notag\\
K_2&=\begin{bmatrix}
   -0.0000 &  -0.2849 &   0.0000  \\
   -0.0000 &  -0.3561 &   0.0000
    \end{bmatrix},
    \notag\\
K_{3}&=\begin{bmatrix}
   -0.0000 &   0.0000 &  -0.4871 \\
    0.0000 &  -0.0000 &   0.0974
    \end{bmatrix}.
\end{align*}
 We obtain observer gain matrices from the optimization in \eqref{choose-gamma} as
 \begin{align*}
 L_{1}&=\begin{bmatrix}
   1.0000  &  -0.0000 \\
   -0.0000  &  0.0000 \\
   -0.0000   & 0.4934
 \end{bmatrix},
 L_{2}=\begin{bmatrix}
  0.3441  & -0.0000 \\
    0.0000 &   1.0000 \\
   -0.0000 &   0.0000
 \end{bmatrix},
 \notag\\
 L_{3}&=\begin{bmatrix}
  -0.0000 &  -0.0000 \\
    0.4160  &  0.0000 \\
   -0.0000  &  1.0000
 \end{bmatrix}.
 \end{align*}
The observer error systems are stable as shown in Fig.\ref{fig:1-1}.
Meanwhile,
Fig.\ref{fig:1-2} shows the evolution of the multi-agent systems \eqref{homogeneous multi-agent system} under the distributed state feedback controllers
\eqref{sate-feedback-controller},
where one can see that all agents' states converge to zero within ten steps, which indicates consensus can be achieved rapidly.

\begin{figure}[!htbp]
\centering
\includegraphics[width=2.5in]{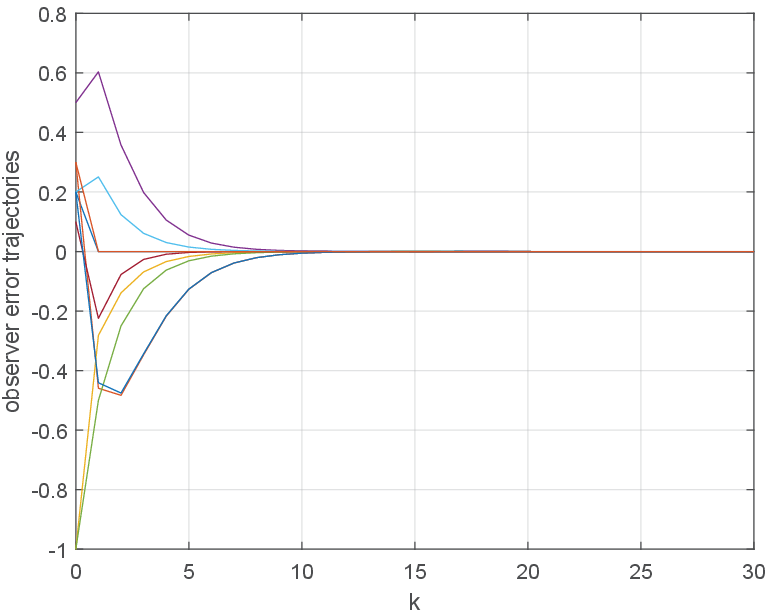}
\caption{Observer error trajectories $\tilde{X}_{i}(k)$ in Theorem \ref{main-result-1}.}
\label{fig:1-1}
\end{figure}

\begin{figure}[!htbp]
\centering
\includegraphics[width=2.5in]{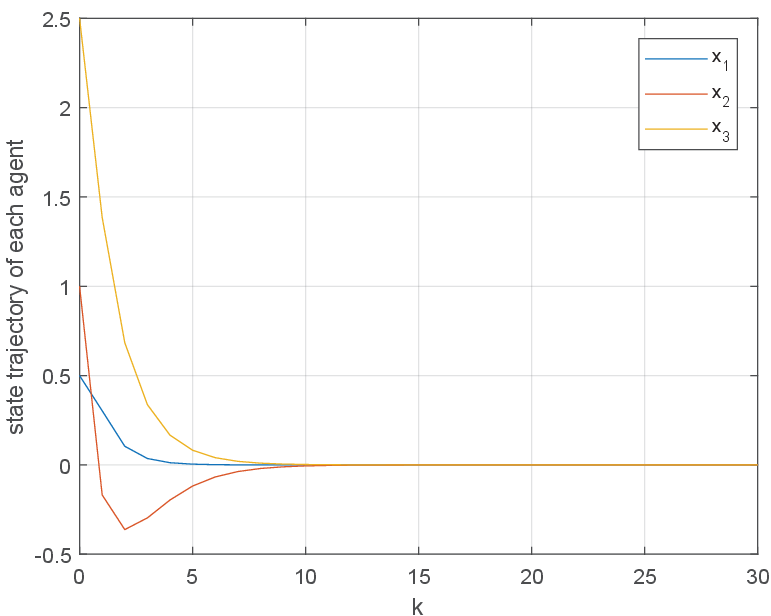}
\caption{The state trajectory of each agent $x_{i}(k),i=1,2,3$.}
\label{fig:1-2}
\end{figure}

\end{example}

\section{Conclusions}\label{sec:conclusion}
In this paper, we have studied the consensus problem for discrete-time linear multi-agent systems by LQ optimal control theory.
Different from the existing consensus algorithms,
we designed a novel distributed controller based on observers involving agent's historical state information by solving Riccati equations.
It's shown that the corresponding global cost function under the proposed controller is asymptotically optimal.
The new consensus algorithm does not require to compute the eigenvalues of the communication topology,
and can achieve a much faster consensus speed than the traditional consensus methods.
Finally,
simulation examples and comparisons with the existing consensus algorithm were provided to demonstrate the feasibility and effectiveness of the proposed control algorithms.



\ifCLASSOPTIONcaptionsoff
  \newpage
\fi



%
\bibliographystyle{IEEEtran}
\bibliography{mybib}

\end{document}